\theoremstyle{plain}
\newtheorem*{theorem*}{Theorem}
\newtheorem*{remark*}{Remark}
\theoremstyle{remark}
\providecommand{\customgenericname}{}
\newcommand{\newcustomtheorem}[2]{%
  \newenvironment{#1}[1]
  {%
   \renewcommand\customgenericname{#2}%
   \renewcommand\theinnercustomgeneric{##1}%
   \innercustomgeneric
  }
  {\endinnercustomgeneric}
}
\begin{document}
\vspace*{3cm}
\thispagestyle{empty}

\author{Shira Zerbib}\thanks{This research was partly supported by the New-England Fund, Technion.}

\address{Department of Mathematics,
University of Michigan, Ann Arbor} \email{zerbib@umich.edu}

\begin{abstract}
A well-known conjecture of Vizing \cite{vizing} is that  $\gamma(G \square H) \ge \gamma(G)\gamma(H)$ for any pair of  graphs $G, H$, where $\gamma$ is the domination number and $G \square H$ is the Cartesian product of $G$ and $H$. Suen and Tarr \cite{ST}, improving a result of Clark and Suen \cite{CS}, showed $\gamma(G \square H) \ge \frac{1}{2}\gamma(G)\gamma(H) + \frac{1}{2}\min(\gamma(G),\gamma(H))$. 
We further improve their result by showing $\gamma(G \square H) \ge \frac{1}{2}\gamma(G)\gamma(H) + \frac{1}{2}\max(\gamma(G),\gamma(H)).$ 
\end{abstract}

\title{An improved bound in Vizing's conjecture}
\maketitle

For a simple graph $G=(V(G),E(G))$ and a vertex $v\in V(G)$, denote by $N_G[v]$ the {\em neighborhood} of $v$ in $G$, namely the set $\{v\} \cup \{u\in V(G) \mid uv \in E(G)\}$. We say that $D\subseteq V(G)$ {\em dominates} $G$ if $V(G)= \bigcup_{v\in D}N_G[v]$. The {\em domination number} $\gamma(G)$ is the minimal size of a set dominating $G$. The {\em Cartesian product} $G \square H$ of a pair of graphs $G,H,$ is the graph whose vertex set is $V(G)\times V(H)$, and whose edge set consists of pairs $(u,v)(u',v')$ in which either $u=u' $ and $vv' \in E(H)$ or $v=v'$ and $uu' \in E(G)$.

In 1963 Vizing \cite{vizing} conjectured that $\gamma(G\square H) \ge \gamma(G)\gamma(H)$. Although proven for certain families of graphs and for all pairs of graphs $G,H$ for which $\gamma(G)\le 3$, this conjecture is still wide open. For surveys on Vizing's conjecture and recent results related to it see \cite{survey, survey2}.

In 2000 Clark and Suen \cite{CS} showed that for every pair of graphs $G,H$ we have $\gamma(G\square H) \ge \frac{1}{2}\gamma(G)\gamma(H)$. Suen and Tarr \cite{ST} then improved this result by showing $\gamma(G\square H) \ge \frac{1}{2}\gamma(G)\gamma(H) + \min(\gamma(G),\gamma(H)).$ Here we further improve these results, establishing $\gamma(G \square H) \ge \frac{1}{2}\gamma(G)\gamma(H) + \frac{1}{2}\max(\gamma(G),\gamma(H)).$. In particular, our proof is simpler the the proof of Suen and Tarr in \cite{ST}. 

\begin{theorem*}\label{main1}
For any pair of graphs $G,H$, $$\gamma(G \square H) \ge \frac{1}{2}\gamma(G)\gamma(H) + \frac{1}{2}\max(\gamma(G),\gamma(H)).$$  
\end{theorem*}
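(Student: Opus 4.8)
The plan is to refine the double-counting argument of Clark and Suen. Fix a minimum dominating set $D$ of $G\square H$, write $n=\gamma(G\square H)=|D|$ and put $k=\gamma(G)$. Choose a minimum dominating set $\{g_1,\dots,g_k\}$ of $G$ together with a partition $V(G)=V_1\sqcup\cdots\sqcup V_k$ with $g_i\in V_i\subseteq N_G[g_i]$ for each $i$. For $v\in V(H)$ let $D^v=\{u\in V(G):(u,v)\in D\}$, so $\sum_{v\in V(H)}|D^v|=n$. For $i\in\{1,\dots,k\}$ let $B_i=\{v\in V(H):(u,v)\in D\text{ for some }u\in V_i\}$ and $Z_i=\{v\in V(H):N_H[v]\cap B_i=\emptyset\}$, and for $v\in V(H)$ let $I_v=\{i:v\in Z_i\}$, so that $\sum_i|Z_i|=\sum_v|I_v|$.

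Two elementary observations drive the proof. First, the \emph{vertical} estimate: for each $i$ the set $B_i\cup Z_i$ dominates $H$, since every $v\notin Z_i$ is dominated by $B_i$ and every $v\in Z_i$ dominates itself; as $|B_i|\le|D\cap(V_i\times V(H))|$, summing over $i$ gives
\[
n+\sum_{i=1}^k|Z_i|\;\ge\;\sum_{i=1}^k\bigl(|B_i|+|Z_i|\bigr)\;\ge\;k\,\gamma(H)\;=\;\gamma(G)\gamma(H).
\]
Second, the \emph{horizontal} estimate: fix $v$ and $i\in I_v$. For every $u\in V_i$ the vertex $(u,v)$ is dominated by some $(u',v')\in D$; since $v\in Z_i$ the dominating pair cannot have $u'=u$ (that would put an element of $N_H[v]$ into $B_i$), so $v'=v$ and $u'u\in E(G)$, whence $u\in N_G[D^v]$, and also $V_i\cap D^v=\emptyset$. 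Therefore $D^v\cup\{g_i:i\notin I_v\}$ dominates $G$, and since $D^v\cap\{g_i:i\notin I_v\}=D^v\cap\{g_1,\dots,g_k\}$, comparing cardinalities with $\gamma(G)=k$ yields
\[
|D^v|-|I_v|\;\ge\;\bigl|D^v\cap\{g_1,\dots,g_k\}\bigr|\;\ge\;0 .
\]
Summing over $v$ gives $\sum_i|Z_i|=\sum_v|I_v|\le n$, which already recovers the Clark--Suen bound $2n\ge\gamma(G)\gamma(H)$.

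Combining the two displays,
\[
2n\;\ge\;\gamma(G)\gamma(H)+\sum_{v\in V(H)}\bigl(|D^v|-|I_v|\bigr),
\]
so it suffices to prove that $T:=\sum_v\bigl(|D^v|-|I_v|\bigr)\ (\ge 0)$ satisfies $T\ge\gamma(H)$: the symmetric argument, applied to a minimum dominating set of $H$ and a partition of $V(H)$, then gives $2n\ge\gamma(G)\gamma(H)+\gamma(G)$, and the two bounds together yield $2\gamma(G\square H)\ge\gamma(G)\gamma(H)+\max(\gamma(G),\gamma(H))$, which is the theorem. By the horizontal estimate we already have $T\ge\bigl|D\cap(\{g_1,\dots,g_k\}\times V(H))\bigr|$, so $T\ge\gamma(H)$ is immediate whenever $D$ meets at least $\gamma(H)$ of the layers over the chosen dominating set of $G$.

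The crux — and the step I expect to be the main obstacle — is the complementary case, in which $D$ is ``light'' over $\{g_1,\dots,g_k\}$. Here one must extract the remaining $\gamma(H)$ units of slack from the global fact that the projection of $D$ to $V(H)$ dominates $H$, hence occupies at least $\gamma(H)$ distinct nonempty layers $D^v$; the point should be that for such a layer equality $|D^v|=|I_v|$ can occur only when $D^v$ is exactly the ``new part'' of an alternative minimum dominating set of $G$, and this rigidity cannot persist simultaneously over all of the (at least $\gamma(H)$) nonempty layers. Turning this into a proof — most likely by choosing the minimum dominating set $\{g_1,\dots,g_k\}$ of $G$ (and the partition $V_i$) to interact suitably with $D$, or by directly building a dominating set of $H$ of the required size out of the layers of $D$ — is where the real work lies; everything above it is bookkeeping.
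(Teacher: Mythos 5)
Your setup is a faithful reconstruction of the Clark--Suen double count, and both of your estimates (vertical and horizontal) are correct; the reduction of the theorem to the claim $T=\sum_v\bigl(|D^v|-|I_v|\bigr)\ge\gamma(H)$ together with its symmetric counterpart is also sound. But the proof is not complete: you explicitly leave open the case where $D$ is ``light'' over the columns $\{g_1,\dots,g_k\}\times V(H)$, and the heuristic you offer there (a ``rigidity'' that ``cannot persist over all nonempty layers'') is not an argument. Indeed, with $\{g_1,\dots,g_k\}$ chosen independently of $D$, your only lower bound on $T$ is $\bigl|D\cap(\{g_1,\dots,g_k\}\times V(H))\bigr|$, which can be $0$, so there is a genuine gap exactly where you say the real work lies.

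The missing idea is the one you gesture at in your final sentence: make the transversal of the partition interact with $D$. Concretely (assuming $\gamma(G)\ge\gamma(H)$, so that only one side of the symmetrization is needed), let $Q$ be the projection of $D$ onto $V(G)$, which dominates $G$, and let $U=\{u_1,\dots,u_k\}$ be a subset of $Q$ of \emph{minimum size among subsets of $Q$ that dominate $G$}; build the partition $V_i\subseteq N_G[u_i]$ from $U$. Now every column $\{u_i\}\times V(H)$ meets $D$, so each $i$ contributes an extra $+1$ to the vertical estimate (the set of $H$-coordinates of $D$ in that column is nonempty and disjoint from $Z_i$), giving $n+\sum_i|Z_i|\ge k\gamma(H)+k$. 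The price is that $k=|U|$ may exceed $\gamma(G)$, which would break your horizontal estimate as written, since you compare cardinalities against $\gamma(G)$; the fix is that the replacement set $D^v\cup\{u_j:j\notin I_v\}$ appearing in that estimate is again a subset of $Q$, so minimality of $U$ \emph{within} $Q$ still forces $|I_v|\le|D^v|$ and hence $\sum_i|Z_i|\le n$. Combining, $2n\ge k\gamma(H)+k\ge\gamma(G)\gamma(H)+\gamma(G)=\gamma(G)\gamma(H)+\max(\gamma(G),\gamma(H))$, with no need for the case analysis you were attempting. This choice of $U$ is precisely the paper's innovation over Clark--Suen; the rest of your bookkeeping matches the published proof.
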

\begin{proof}
Suppose that $\gamma(G) \ge \gamma(H)$. Let $D$ be a dominating set for $G \square H$. 
Let $Q \subset V(G)$ be the projection of $D$ onto $V(G)$. Clearly, $Q$ dominates $G$. Let $U$ be a subset of $Q$ of minimal size that dominates $G$. 
Then $U= \{u_1,\dots,u_k\} \subseteq V(G)$ for some $k \ge \gamma(G)$. Define $S_i=(\{u_i\} \times V(H)) \cap D$, and let $T_i$ the projection of $S_i$ onto $H$. Since $U \subset Q$, for every $1\le i \le k$ we have that $|T_i|=|S_i| \ge 1$. 

Form a partition $\pi_i, ~i=1,\dots,k,$ of $V(G)$,
so that $u_i\in \pi_i$ and $\pi_i\subseteq N_G[u_i]$ for all $i$. This induces a partition $D_i, ~i=1,\dots,k,$ of $D$, where
$D_i = (\pi_i \times V(H)) \cap D$. Let $P_i$ be the projection of $D_i$ onto $H$. Observe that the set $P_i \cup (V (H) -N_H[P_i])$ dominates $H$, and hence for all $i$, 
$$|V(H)-N_H[P_i]| \ge \gamma(H) -|P_i|.$$ 

For $v \in V(H)$ let
$Q_v =D \cap (V(G)\times \{v\})$ and define 
$$C = \{(i,v)\mid \pi_i\times \{v\} \subset N_{G\square H}[Q_v]\}.$$ Set 
$L_i = \{(i,v)\in C \mid v\in V(H)\}$ and  $R_v = \{(i,v)\in C \mid 1\le i \le k\}.$ Then we have, $|C| = \sum_{i=1}^k |L_i| = \sum_{v\in H}|R_v|$. 

Observe that if $v \in (V(H)- N_H[P_i]) \cup T_i$, then the vertices in $\pi_i \times \{v\}$ are dominated by the vertices in $Q_v$ and therefore $(i,v)\in L_i$. Since the sets $V(H)-N_H[P_i]$ and $T_i$  are disjoint, this implies that $|L_i| \ge |V(H)-N_H[P_i]| + |T_i|$. 

Therefore, 
\begin{equation}
\begin{split} 
|C| = &\sum_{i=1}^k |L_i| \ge \sum_{i=1}^k(|V(H)-N_H[P_i]| + |T_i|)\\ & \ge \sum_{i=1}^k(\gamma(H) -|P_i| + 1) \ge k\gamma(H) - |D| + k.
\end{split} 
\end{equation}

We further claim that 
$|R_v| \le |Q_v|$. 
Indeed, if not then the set 
$$U' = \{u\mid (u,v) \in  Q_v\} \cup \{u_j \mid (j,v) \notin R_v\}$$
is a dominating set of $G$ of cardinality $|U'|<|U|$. Moreover, since the projection of $Q_v$ onto $G$ is a subset of $Q$, we have that $U' \subset U$, and thus we obtain a contradiction to the minimality of $U$.  

Thus we have 
\begin{equation} 
|C| \le \sum_{v \in V(H)} |R_v| \le \sum_{v\in V(H)} |Q_v| = |D|.
\end{equation} 

Combining (1) and (2) together we get
$$2|D| \ge k\gamma(H) + k \ge \gamma(G)\gamma(H) + \gamma(G) = \gamma(G)\gamma(H) + \max(\gamma(G),\gamma(H)),$$ which concludes the proof of the theorem.  
\end{proof} 

\begin{remark*}
{\em It follows from the proof that Vizing's conjecture holds for any pair of graphs $G,H$, for which there exists a minimum dominating set $D$ of $G\square H$ so that the projection of $D$ onto $G$ is a minimal dominating set (with respect to containment). Indeed, if such $D$ exists then $U=Q$, and thus instead of Equation (1) we have 
\begin{equation*}
\begin{split} 
|C| = &\sum_{i=1}^k |L_i| \ge \sum_{i=1}^k(|V(H)-N_H[P_i]| + |T_i|)\\ & \ge \sum_{i=1}^k(\gamma(H) -|P_i| + |D_i|) \ge \gamma(G)\gamma(H).
\end{split} 
\end{equation*}
Combining this with Equation (2) we get the result. 

Unfortunately, there exist pairs of graphs $G,H$ for which such $D$ does not exist. An example of such a pair is $G=H=P_4$, where $P_4$ is a path with $4$ vertices.}
\end{remark*} 

\section*{Acknowledgement}
The author is grateful to Ron Aharoni for many helpful discussions.


\begin{thebibliography}{99}
\bibitem{survey} B. Bre\v{s}ar, P. Dorbex, W. Goddard, B. L. Hartnell, M. A. Henning, S. Klav\v{z}ar, and D. F. Rall, Vizing’s Conjecture: A Survey and Recent Results, {\em Journal of Graph Theory}, 69, 46--76, 2012.

\bibitem{CS} W. E. Clark and S. Suen, An Inequality Related to Vizing’s Conjecture, {\em The Electron. J. of Combin}, 7, Note 4, 2000.

\bibitem{fractional} D. C. Fisher, J. Ryan, G. Domke, and A. Majumdar, Fractional domination of strong direct products.
{\em Discrete Appl. Math.}, 50(1), 89--91, 1994.

\bibitem{survey2} B. Hartnell and D. F. Rall, Domination in Cartesian Products: Vizing's Conjecture, in {\em Domination in Graphs-Advanced Topics}, edited by Haynes et al., 163--189, Marcel Dekker, Inc, New York, 1998.

\bibitem{packing} M. S. Jacobson and L. F. Kinch, On the Domination of the Products of Graphs II: Trees. 
{\em J. Graph Theory}, 10, no. 1, 97--106, 1986. 

\bibitem{ST} S. Suen and J. Tarr, An Improved Inequality Related to Vizing’s Conjecture, {\em The Electron. J. of Combin}, 19, 1, 2012.

\bibitem{vizing} V. G. Vizing, The Cartesian product of graphs, {\em Vy\v{c}isl. Sistemy} 9, 30--43, 1963.
\end{thebibliography}
\end{document}